\documentclass[letter,12pt]{amsart} 
\usepackage[svgnames]{xcolor} 


\usepackage[utf8]{inputenc} 
\usepackage[OT2,T1]{fontenc}
\DeclareSymbolFont{cyrletters}{OT2}{wncyr}{m}{n}
\DeclareMathSymbol{\Sha}{\mathalpha}{cyrletters}{"58}

\usepackage{PTSerif} 
\usepackage[all,cmtip]{xy}\xyoption{dvips}

\usepackage[linktocpage]{hyperref} 
\usepackage{amsmath}
\usepackage{amssymb}
\usepackage{amsthm}
\usepackage{blindtext}
\usepackage[margin = 1 in]{geometry}
\usepackage{mathtools}
\usepackage{graphicx}
\usepackage{tikz-cd}
\usepackage{footnote}
\usepackage{fancyhdr}
\usepackage{aliascnt}
\usepackage{hyperref}
\usepackage{stmaryrd}
\usepackage{comment}
\usepackage{mathrsfs}
\usepackage{cleveref}
\usepackage{float}
\usepackage{todonotes}

\usepackage[
backend=biber,
style=alphabetic,
]{biblatex}
\addbibresource{reference.bib}

\makeatletter
\providecommand\@dotsep{5}

\makeatother

\usepackage[normalem]{ulem}
\hypersetup{
    colorlinks=true,
    linkcolor={blue!50!black},
    citecolor={green!50!black},
    urlcolor={red!80!black}
}

\newcommand{\kdot}{{{\,\begin{picture}(1,1)(-1,-2)\circle*{2}\end{picture}\,}}}

\pagestyle{fancy}
\setlength{\headheight}{25pt}

\theoremstyle{plain}
\newtheorem{theorem}{Theorem}[section]
\newaliascnt{cor}{theorem}
\newaliascnt{lem}{theorem}
\newaliascnt{prop}{theorem}
\newaliascnt{assume}{theorem}
\newaliascnt{notat}{theorem}
\newaliascnt{rem}{theorem}
\newaliascnt{quest}{theorem}
\newtheorem{corollary}[cor]{Corollary}
\newtheorem{lemma}[lem]{Lemma}
\newtheorem{proposition}[prop]{Proposition}


\theoremstyle{definition}
\newtheorem{definition}[theorem]{Definition}

\newtheorem{example}[theorem]{Example}

\newtheorem{notation}[notat]{Notation}

\theoremstyle{remark}
\newtheorem{remark}[rem]{Remark}

\newtheorem{qu}[quest]{Question}

\usepackage[all,cmtip]{xy}\xyoption{dvips}

\graphicspath{ {images/} }

\newcommand{\mb}{\mathbb}

\newcommand{\bC}{\mb C}

\newcommand{\N}{\mb{N}}

\renewcommand{\O}{\mathscr{O}}

\newcommand*{\sheafhom}{\mathcal{H}\kern -.5pt om}

\newcommand{\du}{\underline{\Omega}}

\DeclareMathOperator{\id}{id}

\DeclareMathOperator{\Cone}{Cone}

\renewcommand{\to}{\rightarrow}

\setlength{\footskip}{13.0pt}

\DeclareMathAlphabet{\mathchanc}{OT1}{pzc}%
                                 {m}{it}

\newcommand{\mcL}{\mathchanc{L}}

\newcommand{\mcR}{\mathchanc{R}}


\counterwithin{equation}{theorem}

\lhead{} 
\chead{}
\rhead{} 
\title{General base change for relative Du Bois complexes}
\author{Caleb Ji and S\'andor J Kov\'acs}
\date{\today}
\thanks{Caleb Ji was supported in part by NSF Grant DGE-2036197} 
\address{CJ: Columbia University, Department of Mathematics, New York, NY 10027, USA}
\email{caleb.ji@columbia.edu} 
\thanks{S\'andor Kov\'acs was supported in part by NSF Grant DMS-2100389.} 
\address{SJK: University of Washington, Department of Mathematics, 
Seattle, WA 98195, USA}
\email{skovacs@uw.edu}

\begin{document}
\begin{abstract}
    A partial answer is given to a question raised by Kov\'acs and Taji in \cite{sk23}, namely that the relative Du~Bois complex of a family parametrized by a non-singular curve commutes with base change to a general point on the base. It is also shown that this property usually fails for special points. 
\end{abstract}
\maketitle 

\section{Introduction}\noindent
The sheaf of K\"ahler differentials plays an important role in the understanding of the geometry of complex manifolds. Along with its exterior powers it forms the de~Rham complex, a filtered complex that leads to the Hodge-to-de~Rham spectral sequence and to many applications. 

Unfortunately, for singular varieties, these sheaves and the de~Rham complex do not have the same useful properties. However, there is a replacement, the \emph{Deligne-Du~Bois complex} (usually simply referred to as the \emph{Du~Bois complex}) \cite{DB}, which has many of the useful properties of the de~Rham complex in the non-singular case.
For instance it also leads to a spectral sequence similar to the Hodge-to-de~Rham one and it also degenerates at the $E_1$ stage, so it also leads to Hodge theoretic applications. 

The relative situation is more complicated and troublesome. It seems reasonable that one would like to have an object in the relative case similar to the Du~Bois complex. In fact, such an object has been constructed in the case when the base is a non-singular curve \cite{sk1,sk23}.
The sheaf of relative K\"ahler differentials is invariant under base change. This property is a very useful tool in many situations, for instance in the theory of variations of Hodge structure. It would be extremely useful if the same were true for the \emph{relative Du~Bois complex}. We do not expect this to hold in general, but that only makes it more important to determine a reasonable set of assumptions that would imply this. In other words, we are interested in the following:
\begin{qu}[\protect{\cite[Problem~5.2]{sk23}}]\label{quest:restrict}
    Let $f\colon X\rightarrow C$ be a morphism from a complex variety $X$ to a smooth complex curve $C$ of relative dimension $n$.  Let $\du^p_{X/C}$ be the $p^\text{th}$ relative Du~Bois complex, constructed in  \cite{sk1}. Under what conditions does base change hold for $\du^p_{X/C}$? More precisely, let $c$ be a closed point of $C$ giving rise via base change to the closed embedding $\jmath_c\colon X_c\hookrightarrow X$. Under what conditions does the isomorphism 
    \[
    \mcL\jmath_c^*\du_{X/C}^p\simeq \du_{X_c}^p
    \]
    hold? 
\end{qu}

\begin{remark}\label{rem:restriction-to-open}
    It is straightforward and left to the reader that the construction of  $\du^p_{X/C}$ commutes with open embeddings both on the base and on the source, cf.~\cite[1.3.6]{sk1}.
\end{remark}

\noindent
In this paper we prove the following partial answer to this question using the notation introduced in \autoref{quest:restrict}:
\begin{theorem}[=\protect{\autoref{corollary: generic bc}}]\label{main-thm}
    Let $f\colon X\rightarrow C$ be a morphism from a complex variety $X$ to a smooth complex curve $C$.  Then there exists a non-empty open subset $U\subseteq C$ such that for each $c\in U$, 
    \[
    \mcL\jmath_c^*\du^p_{X/C} \simeq \du^p_{X_c}.
    \]
\end{theorem}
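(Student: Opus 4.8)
The plan is to reduce the general statement to a \emph{generic flatness} situation and then invoke the known good behavior of $\du^p$ in the flat, or at least ``nice enough'', case together with \autoref{rem:restriction-to-open}. First I would replace $C$ by a non-empty open subset over which $f$ is flat: by generic flatness there is a dense open $U_0 \subseteq C$ such that $f|_{f^{-1}(U_0)} \colon f^{-1}(U_0) \to U_0$ is flat, and by \autoref{rem:restriction-to-open} the formation of $\du^p_{X/C}$ is unchanged after restricting to $f^{-1}(U_0)$, and likewise $\du^p_{X_c}$ is unaffected for $c \in U_0$. So from the start we may assume $f$ is flat. Next, I would stratify the base further so that the relevant cohomology sheaves become locally free: the relative Du~Bois complex $\du^p_{X/C}$ is an object of the bounded derived category of coherent sheaves on $X$, and each of its cohomology sheaves $\mathcal H^i(\du^p_{X/C})$ is coherent on $X$; pushing to $C$ and applying generic freeness/flatness to the finitely many sheaves $R^j f_* \mathcal H^i(\du^p_{X/C})$ and to the $\mathcal H^i(\du^p_{X/C})$ themselves, I obtain a further dense open $U \subseteq C$ over which everything in sight is flat over $C$.

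The core of the argument is then a base change / Tor-independence statement. Over the open set $U$ constructed above, the closed point $c$ gives a Cartesian square with $X_c \hookrightarrow X$ and $\{c\} \hookrightarrow C$, and the flatness arrangements guarantee that $\mcL\jmath_c^*$ applied to $\du^p_{X/C}$ computes the correct thing termwise — i.e. there is no higher Tor contribution, so $\mcL\jmath_c^*\du^p_{X/C}$ is represented by the ordinary (underived) restriction of a suitable flat representative. At this point I would appeal to the construction of $\du^p_{X/C}$ in \cite{sk1}: the relative Du~Bois complex is built from a hyperresolution (or a simplicial/cubical resolution) of $X$ compatible with the map to $C$, and the formation of $\du^p$ of the fibers $X_c$ is governed by the restriction of that same resolution. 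Concretely, a log resolution or a good semi-simplicial resolution $X_\bullet \to X$ can be chosen so that, after shrinking $U$ once more, $X_\bullet \times_C \{c\} \to X_c$ is again a resolution of the required type for all $c \in U$ (generic smoothness of the strata of the resolution over $C$, in characteristic zero). Then $\mcL\jmath_c^* \du^p_{X/C}$ and $\du^p_{X_c}$ are computed by the same complex of sheaves restricted to the fiber, yielding the claimed isomorphism.

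The step I expect to be the main obstacle is the compatibility of the chosen resolution with base change to the general fiber: one must ensure that the simplicial/cubical hyperresolution used to define $\du^p_{X/C}$ restricts, over a dense open of $C$, to an admissible resolution computing $\du^p_{X_c}$, and that the relative differentials appearing in the construction of $\du^p_{X/C}$ specialize to the absolute differentials on $X_c$ (this is where the flatness of the strata and generic smoothness in characteristic zero are used, via repeated application of generic flatness to the finitely many schemes and morphisms occurring in the resolution). A secondary technical point is bookkeeping the (finitely many) coherent sheaves whose flatness over $C$ must be arranged simultaneously — the cohomology sheaves of $\du^p_{X/C}$, their pushforwards, and the analogous sheaves for the resolution — but since all of these are finite in number and generic flatness is available for each, the intersection of the corresponding dense opens is still a dense open $U \subseteq C$, which is all that is required. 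Finally, one notes that since the statement is local on $C$ and we have only shrunk $C$ finitely many times, the resulting $U$ is the desired non-empty open subset, completing the proof.
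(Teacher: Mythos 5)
Your overall strategy---pick a cubical hyperresolution $X_\kdot\to X$, use generic smoothness in characteristic zero to shrink $C$ so that each $X_\alpha\to C$ is smooth, and then compare $\mcL\jmath_c^*\du^p_{X/C}$ with $\du^p_{X_c}$ via the restricted hyperresolution---is exactly the paper's route (their notion of a \emph{simultaneous relative hyperresolution} plus \autoref{prop: pb}). However, the two steps where the paper does actual work are precisely the ones you assert rather than prove, and your generic-flatness preliminaries do not substitute for them. First, the identification $\du^p_{X/C}\simeq \mcR\pi_{\kdot*}\Omega^p_{X_\kdot/C}$ is not part of the definition: the construction in \cite{sk1} builds $\du^p_{X/C}$ by a descending recursion on cones ($M_p^\kdot=\Cone(w_{p+1})[-1]\otimes(f^*\omega_C)^{-1}$) from the absolute $K_p^\kdot$'s, so one must prove that for a simultaneous relative hyperresolution this recursive object agrees with the naive pushforward of relative differentials; the paper does this via \autoref{lem:partial-hyperres} and \autoref{prop: quop}. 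Saying the complex ``is built from a hyperresolution compatible with the map to $C$'' papers over this.

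Second, your phrase ``computed by the same complex of sheaves restricted to the fiber'' hides the commutation $\mcL\jmath_c^*\mcR\pi_{\kdot*}\simeq \mcR{\pi_c}_{\kdot*}\mcL\jmath_{c,\kdot}^*$. This requires Tor-independence of $X_\alpha$ and $X_c$ \emph{over $X$} (the maps $\pi_\alpha$ are resolutions, hence not flat), which is not implied by flatness of $f$ over $C$ or by flatness of the cohomology sheaves $\mathcal H^i(\du^p_{X/C})$ over $C$; the paper proves it in \autoref{lemma:tor-independence} using that $X_c$ is a Cartier divisor on $X$ not containing the image of the irreducible $X_\alpha$, and then invokes derived base change. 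Relatedly, your first reduction (generic flatness of $f$ and of $R^jf_*\mathcal H^i(\du^p_{X/C})$---note $f$ need not be proper, so these pushforwards need not even be coherent) is inert: it tells you that $\mcL\jmath_c^*\du^p_{X/C}$ is an underived restriction, but gives no comparison with $\du^p_{X_c}$, which is the whole point. Once these two steps are supplied, your argument coincides with the paper's proof.
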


\subsection*{Acknowledgement}
We would like to thank the referee for useful comments and corrections, and in particular for suggesting a simplification in the proof of Example~\ref{ex}

\section{Cubical hyperresolutions}\noindent
An important ingredient of Hodge theory of singular varieties is the notion of hyperresolutions. 
This is a simplicial or cubical variety consisting of smooth complex varieties such that the natural map from its geometric realization to the original singular variety is proper and has contractible fibers. In particular, this natural map is of cohomological descent cf.~\cite[Defs.~5.1,5.6,5.10,Prop.~5.13]{peters}.
A particular flavor of hyperresolutions was developed in \cite{Carlson85} and \cite{GNPP88} independently. Here we follow the terminology of \cite{GNPP88}, where it is called a \emph{cubical hyperresolution} (it is essentially the same as what \cite{Carlson85} calls a \emph{smooth polyhedral resolution}). In the sequel, when we say hyperresolution, we mean a cubical hyperresolution. In particular, we will be using hyperresolutions consisting of finitely many objects. We will also use the following weaker notion:
\begin{definition}[\protect{\rm{\cite[Def.~3.2]{Kovacs25a}}}]\label{def:cubical-partial-hyperresolution} 
    Let $X$ be a scheme of finite type over $\bC$.  A \emph{(cubical) partial hyperresolution} of $X$ is a cubical variety $\varepsilon_\kdot\colon X_\kdot\to X$  that has all the properties of a \emph{cubical hyperresolution}  \cite[Def.~5.10]{peters}, except that the individual varieties $X_\alpha$ are not assumed to be nonsingular.  More precisely, it is a $\square^+_r$ scheme over $X$ for some $r\in\N$ as in \cite[Def.~I.2.12]{GNPP88}, cf.~\cite[Def.~2.12]{kov-schw}, of \emph{cohomological descent} \cite[{\S}5.3]{HTIII},\cite[Defs.~5.6,5.10]{peters}.  This is called a \emph{polyhedral resolution} (as opposed to a \emph{smooth polyhedral resolution}) in \cite[p.~596]{Carlson85}. 
\end{definition}

Recall that the standard way to define the Du~Bois complex of a complex scheme $X$ is as follows. Let $\varepsilon_\kdot\colon X_\kdot\to X$ be a hyperresolution. Then
\begin{equation}
    \label{eq:38}
    \du^p_X\colon=\mcR{\varepsilon_\kdot}_*\Omega^p_{X_\kdot}.
\end{equation}

\section{The relative Du~Bois complex}\noindent
The following notation will be used throughout:
\begin{notation}
    Let $f\colon X\rightarrow C$ be a morphism from a complex variety $X$ to a smooth complex curve $C$ of relative dimension $n$. Further let $c\in C$ be a closed point and $X_c=f^{-1}(c)$ the fiber of $f$ over $c$. We will use $\jmath_c$ to denote the closed embedding $X_c\hookrightarrow X$. Finally,  $\du^p_{X/C}$ will denote the $p^\text{th}$ relative Du~Bois complex constructed in  \cite{sk1}. 
\end{notation}

First, let us record a simple observation on the naturality of the construction of $\du^p_{X/C}$. It was already proved in \cite[p.~6]{sk1} that the construction is independent of the hyperresolution used and it has an interesting consequence that will help us later.

\begin{lemma}\label{lem:partial-hyperres}
    Let $\varepsilon_\kdot\colon X_\kdot\to X$ be a partial hyperresolution as in Definition~\ref{def:cubical-partial-hyperresolution}. Then
    \begin{equation*}
        \du^p_{X/C}\simeq \mcR{\varepsilon_\kdot}_*\du^p_{X_\kdot/C}
    \end{equation*}
\end{lemma}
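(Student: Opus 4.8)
The plan is to reduce the statement to the known independence of $\du^p_{X/C}$ from the choice of hyperresolution, by refining the given partial hyperresolution into an honest one and comparing the two computations. Concretely, I would first take a cubical hyperresolution $\delta_\kdot\colon Y_\kdot\to X_\kdot$ of the cubical variety $X_\kdot$ itself: since each $X_\alpha$ is a scheme of finite type over $\bC$, one may functorially resolve, assembling these into a $\square^+$-scheme $Y_\kdot\to X_\kdot$ whose total space is smooth and which is of cohomological descent over $X_\kdot$ (this is the standard construction in \cite[Exp.~I]{GNPP88}, using that a composition of morphisms of cohomological descent is of cohomological descent, cf.~\cite[\S5.3]{HTIII}). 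Then the composite $\varepsilon_\kdot\circ\delta_\kdot\colon Y_\kdot\to X$ is, up to re-indexing the cube, a genuine cubical hyperresolution of $X$.

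Next I would use this composite hyperresolution to compute $\du^p_{X/C}$. By the definition of the relative Du~Bois complex via hyperresolutions \cite{sk1} together with the independence statement cited in the paragraph preceding the lemma \cite[p.~6]{sk1}, we have
\[
\du^p_{X/C}\simeq \mcR(\varepsilon_\kdot\circ\delta_\kdot)_*\,\du^p_{Y_\kdot/C}\simeq \mcR{\varepsilon_\kdot}_*\,\mcR{\delta_\kdot}_*\,\du^p_{Y_\kdot/C},
\]
the last step being the composition-of-derived-pushforwards formula $\mcR(\varepsilon_\kdot\circ\delta_\kdot)_*\simeq \mcR{\varepsilon_\kdot}_*\circ\mcR{\delta_\kdot}_*$, applied componentwise over the cube $X_\kdot$. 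Here $\du^p_{Y_\kdot/C}$ is simply the (componentwise) relative Kähler differentials $\Omega^p_{Y_\kdot/C}$ since $Y_\kdot$ is smooth over $\bC$, but I will not need that explicitly.

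Finally, I would recognize the inner term as the relative Du~Bois complex of $X_\kdot$: for each component $X_\alpha$, the restriction $\delta_\alpha\colon Y_{\kdot,\alpha}\to X_\alpha$ is a hyperresolution, so by definition $\mcR{\delta_\alpha}_*\du^p_{Y_{\kdot,\alpha}/C}\simeq \du^p_{X_\alpha/C}$, and these assemble to $\mcR{\delta_\kdot}_*\du^p_{Y_\kdot/C}\simeq \du^p_{X_\kdot/C}$ as objects on the cubical variety $X_\kdot$. Substituting this into the display above yields $\du^p_{X/C}\simeq \mcR{\varepsilon_\kdot}_*\du^p_{X_\kdot/C}$, which is the assertion. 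The main obstacle is the bookkeeping in the first step: one must check that resolving each $X_\alpha$ and re-indexing really does produce a cube satisfying \cite[Def.~5.10]{peters} and that cohomological descent is preserved under this composition of cubical morphisms; once that is in place, the rest is formal manipulation of derived pushforwards. A secondary point worth spelling out is that the construction of $\du^p_{-/C}$ in \cite{sk1} is itself functorial enough to be performed componentwise over a cube and to commute with $\mcR{\delta_\kdot}_*$ in the stated way, but this is exactly the content of the independence-of-hyperresolution result already recorded in the excerpt.
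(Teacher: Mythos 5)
Your first move---resolving each $X_\alpha$ and assembling the resolutions into an honest cubical hyperresolution $Y_\kdot\to X$ refining the given partial one---is exactly the combinatorial step the paper takes, and the formal manipulations with $\mcR(\varepsilon_\kdot\circ\delta_\kdot)_*\simeq\mcR\varepsilon_{\kdot*}\circ\mcR\delta_{\kdot*}$ are fine. The gap is in what you then take for granted. Your pivotal identity
\[
\du^p_{X/C}\simeq \mcR(\varepsilon_\kdot\circ\delta_\kdot)_*\,\du^p_{Y_\kdot/C},
\]
and likewise $\mcR\delta_{\alpha*}\du^p_{Y_{\kdot,\alpha}/C}\simeq\du^p_{X_\alpha/C}$, are \emph{not} the definition of the relative Du~Bois complex, nor do they follow from the independence-of-hyperresolution statement in \cite{sk1}. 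They are precisely the assertion of the lemma, specialized to an honest (hence in particular partial) hyperresolution. The construction in \cite{sk1} is intrinsic to $X$: one takes explicit $C^\infty$ incarnations $K_p^\kdot$ of the \emph{absolute} complexes $\du^p_X$ and builds $M_p^\kdot$ by a descending recursion of cones twisted by $f^*\omega_C$, as in \eqref{def-of-M_p}; $\du^p_{X/C}$ is the class of $M_p^\kdot$. Nothing in that definition expresses $\du^p_{X/C}$ as a pushforward of relative Du~Bois complexes from the pieces of a hyperresolution---indeed, \autoref{prop: quop} records such a formula only for simultaneous relative hyperresolutions, and only as a \emph{consequence} of this lemma. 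So your argument reduces the partial case to the honest case and then asserts the honest case by fiat; it is circular as written.

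What is missing is the actual content of the paper's proof: starting from the known absolute statement $\du^p_X\simeq\mcR\varepsilon_{\kdot*}\du^p_{X_\kdot}$ (\cite[V.3.6(5)]{GNPP88}), one checks that the $K_p^\kdot$'s on $X$ and on the $X_\alpha$'s satisfy this same relation, and then propagates it through the recursive cone construction \eqref{def-of-M_p} by descending induction on $p$ (using that tensoring by the line bundle $f^*\omega_C$ and forming cones commute with $\mcR\varepsilon_{\kdot*}$ in the relevant sense). That induction is the heart of the lemma and cannot be bypassed by appealing to ``the definition.'' A smaller but telling slip: your parenthetical claim that $\du^p_{Y_\kdot/C}\simeq\Omega^p_{Y_\kdot/C}$ because $Y_\kdot$ is smooth over $\bC$ is false---that identification requires smoothness over $C$, not over $\bC$ (this is exactly why \autoref{prop: pb} needs a \emph{simultaneous relative} hyperresolution). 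You say you do not need it, but it signals a conflation of the absolute and relative situations that is also behind the main gap.
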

\begin{proof}
    The proof is very similar to that of \cite[V.3.6(5)]{GNPP88}.

    Choose a cubical hyperresolution of $X_\alpha$ for each $\alpha$ in the index set of $X_\kdot$. Then these may be combined to form a cubical hyperresolution of $X$.  Denote the resulting cubical hyperresolution by $\varepsilon'_\kdot\colon X'_\kdot\to X$. (Note that the usual proof of the existence of hyperresolutions produces a hyperresolution like this due to its construction). Then one may replace $\du_{X_\alpha}^\kdot$ with its defining expression as in \eqref{eq:38} using this particular hyperresolution. The composition of these (following the rules of cubical schemes) gives the analogous expression for $X$. In other words, (as proved in \cite[V.3.6(5)]{GNPP88}),
    \begin{equation}
    \label{eq:partial}
        \du^p_{X}\simeq \mcR{\varepsilon_\kdot}_*\du^p_{X_\kdot}.
    \end{equation}
    We will use the construction of $\du^p_{X/C}$ in \cite{sk1} and the notation used there. 
Recall that $K_p^{\kdot}$ denotes an explicit incarnation of $\du^p_X = \mcR{\pi_\kdot}_*\Omega^p_{X_{\kdot}}$ in the derived category given by complexes of sheaves of $C^\infty$ differential forms using a given hyperresolution. It is proved in \cite{sk1} that the construction is independent of the hyperresolution chosen. The next step in the construction is defining $M_p^{\kdot}$, which is used to define $\du^p_{X/C}$.
$M_p^\kdot$ is defined recursively, starting with $p=n-1$ and then defining $M_p^\kdot$ using the definition of $M_{p+1}^\kdot$ and certain morphisms connecting the $M$'s and $K$'s.  In particular, there are morphisms, also defined recursively as outlined above,  
 \[
 w_{p+1}''\colon K_{p+1}^{\kdot}\otimes f^*\omega_C \rightarrow M_{p+1}^{\kdot}\otimes f^*\omega_C
 \]
 and $M_{p}^{\kdot}$ 
 is defined as 
 \begin{equation}
     \label{def-of-M_p}
    M_p^\kdot\colon=\Cone(w_{p+1})[-1]\otimes (f^*\omega_C)^{-1},
 \end{equation}
 where $w_{p+1} = w_{p+1}''\otimes \id_{f^*\omega_C^{-1}}\colon K_{p+1}^\kdot\to M_{p+1}^\kdot$.  We refer to \cite{sk1} for more details. 
    
    Here we will use the cubical hyperresolution constructed above and used in \eqref{eq:partial}.
    Observe that the $K_p^{\kdot}$'s constructed on $X$ and the ones constructed on the $X_\alpha$'s satisfy the same relationship as the classes they represent do in \eqref{eq:partial}. Then, using \autoref{def-of-M_p} and descending induction, it follows that the same relationship as in \eqref{eq:partial} holds for the $M_p^\kdot$'s constructed on $X$ and the $X_\alpha$ respectively. Finally, because $\du^p_{X/C}$ is defined as the equivalence class of $M_p^\kdot$ in the derived category, this implies the desired statement.
\end{proof}

\section{Base change} 
\begin{definition} 
Let $g\colon Y\rightarrow B$ be a morphism of complex schemes $Y$ and $B$. We will say that $g$ admits a \emph{simultaneous relative hyperresolution} if there exists a hyperresolution ${\pi_\kdot}\colon Y_{\kdot}\rightarrow Y$ such that for each $\alpha$ in the index set of $Y_\kdot$, 
$q_\alpha=g\circ\pi_\alpha$ (as below) is smooth. 
\[\begin{tikzcd}
	{Y_{\alpha}} \\
	Y & B
	\arrow["\pi_\alpha"', from=1-1, to=2-1]
	\arrow["q_\alpha", from=1-1, to=2-2]
	\arrow["g"', from=2-1, to=2-2]
\end{tikzcd}\]
\end{definition}
\noindent
If a morphism admits a simultaneous relative hyperresolution, then the same formula as in \eqref{eq:38} applies for $\du^p_{X/C}$ by \autoref{lem:partial-hyperres}:
\begin{proposition}
\label{prop: quop}
Let $f\colon X\rightarrow C$ be a morphism from a complex variety $X$ to a smooth complex curve $C$ and assume that $f$ admits a simultaneous relative hyperresolution.  Then \[\du^p_{X/C}\simeq \mcR{\pi_\kdot}_*\Omega^p_{X_{\kdot}/C}.\]
\end{proposition}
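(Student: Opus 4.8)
The plan is to reduce the statement to \autoref{lem:partial-hyperres} by exhibiting the total space of a simultaneous relative hyperresolution as an honest \emph{partial} hyperresolution whose relative Du~Bois complexes are already the naive relative de~Rham objects $\Omega^p_{\kdot/C}$. Concretely: let $\pi_\kdot\colon X_\kdot\to X$ be a simultaneous relative hyperresolution, so each composite $q_\alpha = f\circ\pi_\alpha\colon X_\alpha\to C$ is smooth. Since every $X_\alpha$ is nonsingular, $\pi_\kdot$ is in particular a cubical hyperresolution of $X$ in the usual sense (not merely a partial one), and it is certainly a partial hyperresolution as in \autoref{def:cubical-partial-hyperresolution}. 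Hence \autoref{lem:partial-hyperres} gives
\[
\du^p_{X/C}\simeq \mcR{\pi_\kdot}_*\du^p_{X_\kdot/C},
\]
and it remains only to identify $\du^p_{X_\alpha/C}$ with $\Omega^p_{X_\alpha/C}$ for each $\alpha$ and to check that these identifications are compatible with the cubical structure maps.

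The key point is therefore: if $q\colon Y\to C$ is a \emph{smooth} morphism to the smooth curve $C$, then $\du^p_{Y/C}\simeq\Omega^p_{Y/C}$, a locally free sheaf placed in degree $0$. I would extract this from the construction of $\du^p_{Y/C}$ in \cite{sk1} recalled in the proof of \autoref{lem:partial-hyperres}: when $Y$ is smooth one may take the trivial (one-object) hyperresolution, so $K_p^\kdot$ is just $\Omega^p_Y$; the recursively defined morphisms $w_{p+1}''\colon K_{p+1}^\kdot\otimes f^*\omega_C\to M_{p+1}^\kdot\otimes f^*\omega_C$ are, in the smooth case, the maps encoding the relative de~Rham differential, and the cone construction $M_p^\kdot = \Cone(w_{p+1})[-1]\otimes(f^*\omega_C)^{-1}$ unwinds, by descending induction on $p$ starting from $M_n^\kdot = K_n^\kdot = \Omega^n_Y$ and $\Omega^p_{Y/C} = \coker(\Omega^p_Y \leftarrow \Omega^{p-1}_Y\otimes q^*\omega_C)$ (equivalently the kernel/cokernel description of the relative de~Rham complex via the short exact sequence $0\to q^*\omega_C\to\Omega^1_Y\to\Omega^1_{Y/C}\to 0$ and its exterior powers), precisely to $\Omega^p_{Y/C}$ in degree $0$. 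This is exactly the statement that in the smooth case the relative Du~Bois complex recovers the sheaf of relative differential forms, which is asserted among the basic properties in \cite{sk1}; I would simply cite that property rather than redo the induction.

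With $\du^p_{X_\alpha/C}\simeq\Omega^p_{X_\alpha/C}$ for every $\alpha$, the cubical object $\du^p_{X_\kdot/C}$ is identified with $\Omega^p_{X_\kdot/C}$ — here one must observe that the identifications are natural in $X_\alpha$ (both sides are functorial for the smooth $C$-morphisms appearing as face maps of $X_\kdot$), so they assemble into an isomorphism of cubical complexes, and applying $\mcR{\pi_\kdot}_*$ preserves this. Combining with the display above yields $\du^p_{X/C}\simeq\mcR{\pi_\kdot}_*\Omega^p_{X_\kdot/C}$, as desired.

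The main obstacle is the naturality/compatibility bookkeeping rather than any geometric input: one must make sure that the isomorphism $\du^p_{Y/C}\simeq\Omega^p_{Y/C}$ produced by unwinding the recursive construction is genuinely functorial in the smooth $C$-scheme $Y$, so that it is compatible with the face and degeneracy maps of the cubical variety $X_\kdot$ and hence descends to an isomorphism after $\mcR{\pi_\kdot}_*$. Once the relevant property of \cite{sk1} is invoked in its functorial form (as it is used implicitly already in the proof of \autoref{lem:partial-hyperres}), the rest is formal.
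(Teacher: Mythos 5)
Your proposal is correct and follows essentially the same route as the paper: apply Lemma~\ref{lem:partial-hyperres} to get $\du^p_{X/C}\simeq \mcR{\pi_\kdot}_*\du^p_{X_\kdot/C}$, then use smoothness of each $q_\alpha=f\circ\pi_\alpha$ to identify $\du^p_{X_\alpha/C}$ with $\Omega^p_{X_\alpha/C}$. The paper's proof is a two-line version of yours, taking the smooth-case identification and its naturality for granted as basic properties of the construction in \cite{sk1}, whereas you spell out the unwinding of the cone construction and the functoriality bookkeeping explicitly.
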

\begin{proof}
 By \autoref{lem:partial-hyperres}, $\du^p_{X/C}\simeq \mcR{\pi_\kdot}_*\du^p_{X_{\kdot}/C}$ and because $\pi_\kdot\colon X_\kdot\to X$ is a  simultaneous relative hyperresolution, $\du^p_{X_{\alpha}/C}\simeq\Omega^p_{X_{\alpha}/C}$ for each $\alpha$ in the index set of $X_\kdot$.
\end{proof} 

\noindent
We will also need the following simple fact:
\begin{lemma}
    \label{lemma:tor-independence}
    Let $X$ be a scheme, $H\subseteq X$ an effective Cartier divisor, and $\pi\colon\widetilde X\to X$ a morphism from a reduced irreducible scheme $\widetilde X$ such that $\pi(\widetilde X)\not\subseteq H$. Then $H$ and $\widetilde X$ are Tor-independent with respect to $X$.  
\end{lemma}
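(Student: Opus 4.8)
The plan is to reduce to a local and then essentially one-dimensional computation. Tor-independence of $H$ and $\widetilde X$ over $X$ means that for all $i>0$ and all points, $\Tor_i^{\O_X}(\O_H,\O_{\widetilde X})=0$; since this is a local question on $\widetilde X$ (and on $X$), I would first replace $X$ by an affine open $\Spec A$ and $\widetilde X$ by $\Spec B$ with $A\to B$ a ring map, $B$ a domain. The divisor $H$ is cut out, locally on $X$, by a single element $t\in A$ which is a nonzerodivisor (this is what ``effective Cartier divisor'' gives us). Thus $\O_H$ has the two-term free resolution $0\to A\xrightarrow{\,t\,}A\to A/tA\to 0$, and tensoring with $B$ shows that the only possibly nonzero higher Tor is
\[
\Tor_1^{A}(A/tA,\,B)=\{b\in B : tb=0\}=\ker\bigl(B\xrightarrow{\,t\,}B\bigr).
\]
So the entire statement comes down to showing that multiplication by (the image of) $t$ on $B$ is injective.

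The key point is the hypothesis $\pi(\widetilde X)\not\subseteq H$. Let $s\in B$ denote the image of $t$ under $A\to B$. Saying $\pi(\widetilde X)\not\subseteq H$ means $V(s)\subsetneq \Spec B$, i.e. $s$ is not contained in every prime of $B$; equivalently $s$ is not nilpotent in $B$. But $B$ is a domain (reduced and irreducible), so the only way multiplication by $s$ could fail to be injective is if $s=0$, which would force $V(s)=\Spec B$, i.e. $\pi(\widetilde X)\subseteq H$ — contradicting the hypothesis. Hence $s$ is a nonzerodivisor on $B$, so $\ker(B\xrightarrow{s}B)=0$, and therefore $\Tor_i^{A}(A/tA,B)=0$ for all $i>0$. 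Since these local vanishings assemble to the vanishing of the sheaf $\Tor$'s, $H$ and $\widetilde X$ are Tor-independent over $X$.

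I do not anticipate a serious obstacle here; the only mild care needed is in the reductions. One should note that ``effective Cartier divisor'' is exactly the condition allowing a local defining equation by a nonzerodivisor, so no Noetherian or other finiteness hypothesis on $X$ is required for the local resolution argument. One should also be slightly careful that $\widetilde X$ need not be affine and $\pi$ need not be nice, but since Tor-independence is checked stalkwise this is harmless: for a point $x\in\widetilde X$ with image $\pi(x)\in X$, work with $\O_{X,\pi(x)}\to\O_{\widetilde X,x}$, use that $\O_{\widetilde X,x}$ is a domain because $\widetilde X$ is reduced and irreducible, and that the image of a local equation of $H$ in $\O_{\widetilde X,x}$ is nonzero precisely because $\pi(\widetilde X)\not\subseteq H$ (the generic point of $\widetilde X$ does not map into $H$, and it specializes to $x$). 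The rest is the two-term-resolution computation above.
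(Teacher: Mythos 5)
Your proof is correct and takes essentially the same approach as the paper: both use the length-one resolution of $\O_H$ coming from the Cartier divisor condition, and both deduce injectivity of the pulled-back multiplication map from the integrality of $\widetilde X$ together with the hypothesis $\pi(\widetilde X)\not\subseteq H$. The only difference is presentational — you argue stalkwise with local rings that are domains, while the paper argues globally that the kernel of $\pi^*\O_X(-H)\to\pi^*\O_X$ is torsion and the source is torsion-free.
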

\begin{proof}
    We need to prove that 
    \[
    \mcL\pi^*\O_H\simeq\pi^*\O_H.
    \]
    The standard short exact sequence,
    \[
    \xymatrix{
    0\ar[r]&\O_X(-H)\ar[r]& \O_X\ar[r] & \O_H \ar[r] & 0,
    }
    \]
    gives a two term resolution of $\O_H$ and shows that $\mcL\pi^*\O_H\simeq\big[\pi^*\O_X(-H)\to\pi^*\O_X\big]$.

    Next, let $U\colon=X\setminus H$ and observe that the morphism $\pi^*\O_X(-H)\to\pi^*\O_X$ is an isomorphism on $\pi^{-1}U$, which is a dense open subset of $\widetilde X$ by assumption. This implies that the kernel of $\pi^*\O_X(-H)\to\pi^*\O_X$ is a torsion $\O_{\widetilde X}$-module.
    
    On the other hand, $\pi^*\O_X(-H)$ is locally isomorphic to $\O_{\widetilde X}$, in particular it is torsion-free. It follows that the morphism $\pi^*\O_X(-H)\to\pi^*\O_X$ is injective and hence, as a complex, quasi-isomorphic to its cokernel, which is isomorphic to $\pi^*\O_H$. This proves the desired statement.
\end{proof}

\noindent
Now we are ready to prove the key base change result. 

\begin{theorem}
\label{prop: pb}
Let $f\colon X\rightarrow C$ be a morphism from a complex variety $X$ to a smooth complex curve $C$ and assume that $f$ admits a simultaneous relative hyperresolution.
Then for each closed point $c$ of $C$ with fiber $X_c$, we have 
\[
\mcL\jmath_c^*\du_{X/C}^p \simeq \du^p_{X_c}.
\]
\end{theorem}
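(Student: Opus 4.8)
The plan is to reduce the statement to a concrete comparison on a simultaneous relative hyperresolution and then invoke Tor-independence to commute the derived pullback past the pushforward. First I would fix a simultaneous relative hyperresolution $\pi_\kdot\colon X_\kdot\to X$, so that by \autoref{prop: quop} we have $\du^p_{X/C}\simeq\mcR{\pi_\kdot}_*\Omega^p_{X_\kdot/C}$. The fiber $X_c=f^{-1}(c)$ is the effective Cartier divisor $f^*(c)$ on $X$ (here $c$ is a point of the smooth curve $C$, so its ideal is locally principal), and similarly $X_{\alpha,c}=q_\alpha^{-1}(c)$ is an effective Cartier divisor on each $X_\alpha$; moreover the restricted cubical variety $\pi_{\kdot,c}\colon X_{\kdot,c}\to X_c$ obtained by base change along $\jmath_c$ is again of cohomological descent (restriction of a cubical hyperresolution to a fiber), in fact a partial hyperresolution of $X_c$ in the sense of \autoref{def:cubical-partial-hyperresolution}, with each $X_{\alpha,c}$ smooth because $q_\alpha$ is smooth. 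Hence by \autoref{lem:partial-hyperres} applied to $X_c$ we get $\du^p_{X_c}\simeq\mcR{\pi_{\kdot,c}}_*\Omega^p_{X_{\kdot,c}}$, and since each $X_{\alpha,c}$ is smooth the individual terms are honest sheaves of Kähler differentials.

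Next I would carry out the base-change computation term by term on the cube. On each $X_\alpha$, since $q_\alpha\colon X_\alpha\to C$ is smooth, $\Omega^p_{X_\alpha/C}$ is locally free, and the relative cotangent sequence for $X_{\alpha,c}\hookrightarrow X_\alpha\to C$ (which is split/short exact because $X_{\alpha,c}$ is a smooth fiber of a smooth morphism) gives $\jmath_{\alpha,c}^*\Omega^p_{X_\alpha/C}\simeq\Omega^p_{X_{\alpha,c}/c}=\Omega^p_{X_{\alpha,c}}$. Because $\Omega^p_{X_\alpha/C}$ is locally free, $\mcL\jmath_{\alpha,c}^*\Omega^p_{X_\alpha/C}\simeq\jmath_{\alpha,c}^*\Omega^p_{X_\alpha/C}$, so there is no higher Tor to worry about on the terms of the cube. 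The remaining point is to commute $\mcL\jmath_c^*$ past $\mcR{\pi_\kdot}_*$: this is exactly a derived base-change statement for the Cartesian square with $X_c=X\times_C\{c\}$, and it holds provided $X_c$ (equivalently the divisor $f^*(c)$) and $X_\kdot$ are Tor-independent over $X$. This is where \autoref{lemma:tor-independence} enters: $f^*(c)$ is an effective Cartier divisor on $X$, each $X_\alpha$ is reduced and irreducible (or one reduces to components) and dominates $C$ via $q_\alpha$ — in particular $\pi_\alpha(X_\alpha)\not\subseteq f^*(c)$ since $q_\alpha$ is smooth hence dominant onto $C$ — so the lemma gives $\mcL\pi_\alpha^*\O_{X_c}\simeq\pi_\alpha^*\O_{X_c}$ for every $\alpha$, i.e.\ Tor-independence holds at every stage of the cube.

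Putting these together: $\mcL\jmath_c^*\du^p_{X/C}\simeq\mcL\jmath_c^*\mcR{\pi_\kdot}_*\Omega^p_{X_\kdot/C}$, and by the Tor-independence just established the flat base-change isomorphism for the cubical diagram identifies this with $\mcR{\pi_{\kdot,c}}_*\mcL\jmath_{\kdot,c}^*\Omega^p_{X_\kdot/C}\simeq\mcR{\pi_{\kdot,c}}_*\Omega^p_{X_{\kdot,c}}\simeq\du^p_{X_c}$, using the term-by-term computation and \autoref{lem:partial-hyperres} in the last two steps. I expect the main obstacle to be the bookkeeping of the cohomological-descent property under restriction to the fiber — i.e.\ verifying carefully that $X_{\kdot,c}\to X_c$ is still a partial hyperresolution so that \autoref{lem:partial-hyperres} applies on the special fiber — together with making the derived base-change for cubical (rather than single) morphisms precise; both are essentially formal given Tor-independence, but they are the steps where one must be attentive. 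One should also remark that the construction of $\du^p_{X/C}$ via the complexes $M_p^\kdot$ is compatible with the identification $\du^p_{X/C}\simeq\mcR{\pi_\kdot}_*\Omega^p_{X_\kdot/C}$ in the smooth-fiber case, which is precisely the content of \autoref{prop: quop}, so no separate argument about the $M_p^\kdot$'s is needed here.
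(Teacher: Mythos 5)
Your proposal is correct and follows essentially the same route as the paper's proof: reduce via \autoref{prop: quop} to $\mcR{\pi_\kdot}_*\Omega^p_{X_\kdot/C}$, use \autoref{lemma:tor-independence} (each $X_\alpha$ dominates $C$ since $q_\alpha$ is smooth, and $X_c$ is an effective Cartier divisor) to justify derived base change across the cube, and conclude term-by-term from the local freeness of $\Omega^p_{X_\alpha/C}$ together with the fact that the restricted cube $X_{\kdot,c}\to X_c$ is again a hyperresolution computing $\du^p_{X_c}$. The only cosmetic difference is that you route the special-fiber identification through \autoref{lem:partial-hyperres}, whereas the paper notes directly that the base-changed cube is a genuine hyperresolution and applies the defining formula.
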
 
\begin{proof}
Consider the diagram given by the presumed simultaneous relative hyperresolution, ${\pi_\kdot}\colon X_{\alpha}\rightarrow X$. Note that we may assume that $X_\alpha$ is irreducible for each $\alpha$.
\[\begin{tikzcd}
	{(X_{\alpha})_c} & {X_{\alpha}} \\
	{X_c} & X \\
	c & C
	\arrow["\jmath_{c,\alpha}", from=1-1, to=1-2]
	\arrow["{{\pi_\kdot}_c}"', from=1-1, to=2-1]
	\arrow["{\pi_\kdot}", from=1-2, to=2-2]
	\arrow["\jmath_c", from=2-1, to=2-2]
	\arrow["{f_c}"', from=2-1, to=3-1]
	\arrow["f", from=2-2, to=3-2]
	\arrow[from=3-1, to=3-2]
\end{tikzcd}\]
It follows by the assumption that $\pi_\kdot$ is a simultaneous relative hyperresolution, that the base change ${\pi_c}_\kdot\colon (X_{\kdot})_c\rightarrow X_c$ is also a hyperresolution.  Then $\du^p_{X_c}\simeq \mcR({\pi_\kdot}_c)_*\Omega^p_{(X_{\kdot})_c}$ by \eqref{eq:38}.  
On the other hand,  $\du^p_{X/C} \simeq \mcR{\pi_\kdot}_*\Omega^p_{X_{\kdot}/C}$ by \autoref{prop: quop}.  
Observe that the fact that $X_\alpha$ is smooth over $C$ implies that $\pi_\alpha(X_\alpha)\not\subseteq X_c$ for any $c\in C$ closed point. Because $C$ is a smooth curve, each closed point $c\in C$ is an effective Cartier divisor on $C$ and hence $X_c$ is an effective Cartier divisor on $X$. Then it follows by \autoref{lemma:tor-independence} that $X_{\alpha}$ and $X_c$ are Tor-independent over $X$.  Therefore we may apply the base change statement \cite[\href{https://stacks.math.columbia.edu/tag/08IB}{08IB}]{stacks-project} to the top square, and hence we conclude that
\[
\mcL\jmath_c^*\du_{X/C}^p \simeq \mcL\jmath_c^*\mcR{\pi_\kdot}_*\Omega^p_{X_{\kdot}/C} \simeq \mcR{{\pi_c}_\kdot}_*\mcL\jmath_{c,\kdot}^*\Omega^p_{X_{\kdot}/C}.
\]
Furthermore, $X_\alpha$ being smooth over $C$ implies that $\Omega^p_{X_{\alpha}/C}$ is locally free on $X_{\alpha}$. 
Thus, $\mcL\jmath_{\alpha}^*\Omega^p_{X_{\alpha}/C} = \jmath_{\alpha}^*\Omega^p_{X_{\alpha}/C} \simeq \Omega^p_{(X_{\alpha})_c}$, and so
\[
\mcL\jmath_c^*\du_{X/C}^p \simeq \mcR{{\pi_c}_\kdot}_*\Omega^p_{(X_{\alpha})_c} \simeq \du^p_{X_c},
\]
as desired. 
\end{proof}

\noindent
\autoref{prop: pb} easily implies our main result, \autoref{main-thm}:
\begin{corollary}[=\protect{\autoref{main-thm}}]
\label{corollary: generic bc}
Let $f\colon X\rightarrow C$ be a morphism from a complex variety $X$ to a smooth complex curve $C$.  Then there exists a non-empty open subset $U\subseteq C$ such that for each $c\in U$, 
\[
    \mcL\jmath_c^*\du^p_{X/C} \simeq \du^p_{X_c}.
\]
\end{corollary}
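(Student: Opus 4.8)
The plan is to reduce the general case to Theorem~\ref{prop: pb} by showing that, after shrinking $C$ to a suitable non-empty open subset, the morphism $f\colon X\to C$ admits a simultaneous relative hyperresolution. First I would fix a cubical hyperresolution $\pi_\kdot\colon X_\kdot\to X$, which exists by the general theory; each $X_\alpha$ is a non-singular complex variety and $\pi_\alpha\colon X_\alpha\to X$ is a morphism, so the composite $q_\alpha=f\circ\pi_\alpha\colon X_\alpha\to C$ is a morphism from a smooth variety to a smooth curve. The key point is generic smoothness: since we are in characteristic zero, for each $\alpha$ there is a non-empty open $U_\alpha\subseteq C$ over which $q_\alpha$ is smooth (this is where a point with non-reduced or singular fiber gets thrown out, and it is essentially the only subtlety). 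Because the index set of the cubical scheme $X_\kdot$ is finite, the intersection $U\colon=\bigcap_\alpha U_\alpha$ is still a non-empty open subset of $C$.

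Next I would pass to this open subset. Set $X_U\colon=f^{-1}(U)$ and let $f_U\colon X_U\to U$ be the restriction. Restricting the cubical hyperresolution, $(\pi_U)_\kdot\colon (X_\kdot)_U\to X_U$ is again a cubical hyperresolution (hyperresolutions restrict to open subsets, and the relevant cohomological descent is preserved), and by construction each composite $(X_\alpha)_U\to U$ is smooth. Hence $f_U$ admits a simultaneous relative hyperresolution. I would also invoke Remark~\ref{rem:restriction-to-open}: the relative Du~Bois complex commutes with the open embeddings $X_U\hookrightarrow X$ and $U\hookrightarrow C$, so $\du^p_{X/C}|_{X_U}\simeq \du^p_{X_U/U}$. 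Likewise, for any closed point $c\in U$ the fibers agree, $X_c=(X_U)_c$, and the closed embedding $\jmath_c$ factors through $X_U$, so $\mcL\jmath_c^*\du^p_{X/C}\simeq \mcL\jmath_c^*\du^p_{X_U/U}$.

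Finally I would apply Theorem~\ref{prop: pb} to $f_U\colon X_U\to U$: for every closed point $c\in U$ with fiber $X_c$,
\[
\mcL\jmath_c^*\du^p_{X_U/U}\simeq \du^p_{X_c}.
\]
Combining this with the two identifications from the previous paragraph gives $\mcL\jmath_c^*\du^p_{X/C}\simeq \du^p_{X_c}$ for all $c\in U$, which is exactly the assertion.

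The main obstacle, such as it is, is really just bookkeeping: one must make sure that the hyperresolution behaves well under restriction to the open subset and that the relative Du~Bois complex construction of \cite{sk1} is compatible with these restrictions in the precise form needed — this is the content of Remark~\ref{rem:restriction-to-open} and the discussion preceding Lemma~\ref{lem:partial-hyperres}. There is no genuine geometric difficulty beyond generic smoothness; the finiteness of the cubical index set is what makes the ``shrink $C$'' step legitimate, and everything else is formal.
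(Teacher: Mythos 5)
Your proposal is correct and follows essentially the same route as the paper: fix a finite cubical hyperresolution, use generic smoothness to find $U_\alpha$ for each $q_\alpha=f\circ\pi_\alpha$, intersect the finitely many $U_\alpha$, and apply Theorem~\ref{prop: pb} together with Remark~\ref{rem:restriction-to-open}. The only cosmetic difference is that the paper dispenses with the non-dominant case up front as vacuous, whereas you absorb it implicitly into the choice of $U_\alpha$; your extra bookkeeping about restricting to $X_U$ is just an expanded version of the paper's citation of the remark.
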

\begin{proof}
    Note that the statement is vacuously true if $f$ is not dominant. 
 Otherwise, let $\pi_\kdot\colon X_{\kdot}\to X$ be a finite cubical hyperresolution. 
     Because each $X_\alpha$ is smooth, by generic smoothness there exists a non-empty open $U_\alpha\subseteq C$ such that $f\circ{\pi_\alpha}\colon (f\circ{\pi_\alpha})^{-1}(U_\alpha)\rightarrow U_\alpha$ is smooth. 
     Take $U$ to be the intersection of the $U_\alpha$, which is open as the hyperresolution is finite.
     Then  \autoref{prop: pb} implies that base change holds over $U$, as desired, cf.~\autoref{rem:restriction-to-open}. 
\end{proof}

Kov\'acs and Taji constructed the full relative Du Bois complex $\du_{X/C}^\kdot$ over a curve whose graded pieces are given by the $\du^p_{X/C}$ above in \cite{sk23}.  They posed the problem of finding suitable conditions under which the formation of $\du_{X/C}^\kdot$ commutes with the base change to a closed point, cf.~\autoref{quest:restrict}, \cite[Problem 5.2]{sk23}.  
The proof of \autoref{prop: quop} essentially verbatim proves the same statement for $\du_{X/C}^\kdot$ and  combined with the construction of $\du_{X/C}^\kdot$ in \cite{sk23} and the proofs of \autoref{prop: pb} and \autoref{corollary: generic bc}, it leads easily to the following corollary:
\begin{corollary}
Let $f\colon X\rightarrow C$ be a morphism from a complex variety $X$ to a smooth complex curve $C$ and assume that $f$ admits a simultaneous relative hyperresolution. Then 
\[
\mcL\jmath_c^*\du^\kdot_{X/C}\simeq \du^\kdot_{X_c}.
\]
    In particular, for an arbitrary $f$, there exists a non-empty open subset $U\subseteq C$ such that for each $c\in U$, 
\[
    \mcL\jmath_c^*\du^\kdot_{X/C} \simeq \du^\kdot_{X_c}.
\]
\end{corollary}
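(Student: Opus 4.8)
The plan is to reduce the statement about the full complex $\du^\kdot_{X/C}$ to the graded-piece case already handled in \autoref{prop: pb} and \autoref{corollary: generic bc}, by carefully re-running those proofs with $\du^p$ replaced by $\du^\kdot$. Recall from \cite{sk23} that $\du^\kdot_{X/C}$ is built from a hyperresolution $\pi_\kdot\colon X_\kdot\to X$ as a filtered complex whose underlying object is $\mcR{\pi_\kdot}_*\Omega^\kdot_{X_\kdot/C}$, with the filtration inducing the $\du^p_{X/C}$ as graded pieces. First I would record the analogue of \autoref{lem:partial-hyperres} for $\du^\kdot_{X/C}$: the argument there was purely formal (combining hyperresolutions of the $X_\alpha$ into one for $X$, and invoking that the construction in \cite{sk23} is independent of the hyperresolution), and nothing in it used that we were working one $p$ at a time, so it applies verbatim to give $\du^\kdot_{X/C}\simeq\mcR{\varepsilon_\kdot}_*\du^\kdot_{X_\kdot/C}$ for a partial hyperresolution $\varepsilon_\kdot$.

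Next I would establish the analogue of \autoref{prop: quop}: if $f$ admits a simultaneous relative hyperresolution $\pi_\kdot\colon X_\kdot\to X$, then each $X_\alpha$ is smooth over $C$, so $\du^\kdot_{X_\alpha/C}\simeq\Omega^\kdot_{X_\alpha/C}$ (the relative Du~Bois complex of a smooth morphism is the relative de~Rham complex), and combining this with the partial-hyperresolution formula yields $\du^\kdot_{X/C}\simeq\mcR{\pi_\kdot}_*\Omega^\kdot_{X_\kdot/C}$. Then I would run the proof of \autoref{prop: pb} with $\Omega^p_{X_\kdot/C}$ replaced by $\Omega^\kdot_{X_\kdot/C}$: the base change to a point $c$ preserves the property of being a hyperresolution, so $\du^\kdot_{X_c}\simeq\mcR({\pi_\kdot}_c)_*\Omega^\kdot_{(X_\kdot)_c}$; the Tor-independence of $X_\alpha$ and $X_c$ over $X$ from \autoref{lemma:tor-independence} still holds (it only used that $X_\alpha$ is irreducible and not contained in the fiber, which is unchanged); so \cite[\href{https://stacks.math.columbia.edu/tag/08IB}{08IB}]{stacks-project} gives $\mcL\jmath_c^*\mcR{\pi_\kdot}_*\Omega^\kdot_{X_\kdot/C}\simeq\mcR{{\pi_c}_\kdot}_*\mcL\jmath_{c,\kdot}^*\Omega^\kdot_{X_\kdot/C}$; and finally $\Omega^p_{X_\alpha/C}$ is locally free for every $p$ (as $X_\alpha/C$ is smooth), so $\mcL\jmath_{c,\alpha}^*\Omega^\kdot_{X_\alpha/C}\simeq\Omega^\kdot_{(X_\alpha)_c}$ termwise, proving $\mcL\jmath_c^*\du^\kdot_{X/C}\simeq\du^\kdot_{X_c}$. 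The ``in particular'' clause then follows by the same generic-smoothness argument as in \autoref{corollary: generic bc}: a finite cubical hyperresolution becomes a simultaneous relative hyperresolution over the intersection $U$ of the (finitely many) open loci $U_\alpha$ on which $f\circ\pi_\alpha$ is smooth.

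The only genuinely non-formal point is making sure that the ``independence of the hyperresolution'' and the recursive $\Cone$-construction from \cite{sk23} behave well enough with respect to the filtration that all the identifications above can be made as filtered (or at least graded-compatible) isomorphisms in the appropriate derived category; but since the construction of $\du^\kdot_{X/C}$ in \cite{sk23} is set up precisely so that its formation is compatible with $\mcR{\pi_\kdot}_*$ and its graded pieces are the $\du^p_{X/C}$, this is exactly the statement ``the proof of \autoref{prop: quop} essentially verbatim proves the same for $\du^\kdot_{X/C}$'' asserted in the text, and I would invoke it rather than reprove it. I expect the main obstacle is purely expository: one must cite the construction in \cite{sk23} with enough precision that the reader believes the partial-hyperresolution formula and the smooth-case identification $\du^\kdot_{X_\alpha/C}\simeq\Omega^\kdot_{X_\alpha/C}$ hold on the nose, after which the base-change argument is a line-by-line copy of the $\du^p$ case with no new ideas required.
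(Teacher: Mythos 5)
Your proposal is correct and follows essentially the same route as the paper, which likewise proves this corollary by observing that the arguments of Proposition~\ref{prop: quop}, Theorem~\ref{prop: pb}, and Corollary~\ref{corollary: generic bc} carry over verbatim to $\du^\kdot_{X/C}$ using its construction in \cite{sk23}. Your added caveat about checking compatibility of the identifications with the filtration is a reasonable point of care, but it does not change the argument.
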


We will now give an example of interest in which base change does not hold.

\begin{example}\label{ex}
Let $f\colon X\rightarrow C$ be a morphism of relative dimension $n$ from a smooth complex variety $X$ to a smooth complex curve $C$. Assume that $f\colon X\rightarrow C$  is smooth outside a single point $0\in C$, over which the fiber $\imath\colon Y=X_0\hookrightarrow X$ is a divisor with simple normal crossings.
In this scenario we claim that base change does not hold, and in particular, 
\[
\mcL\imath^*\du_{X/C}^{n}\not\simeq \du_{Y}^{n}.
\]

To show this, by \cite[Example 7.23]{peters}, we may choose the single sheaf $\Omega^p_X/I_Y\Omega^p_X(\log Y)$ as an explicit representative for $\imath_*\du^p_Y$.
First, pull-back the fundamental distinguished triangle  to $Y$:
\[
\xymatrix{\du_{X/C}^{p-1}\otimes f^*\omega_C\ar[r] &\Omega^p_X\ar[r] &\du_{X/C}^p\ar[r]^-{+1}&}
\]
This gives the distinguished triangle 
\[
\xymatrix{\mcL\imath^*\du_{X/C}^{p-1}\ar[r] & \imath^*\Omega^p_X \ar[r] & \mcL\imath^*\du_{X/C}^p\ar[r]^-{+1}&.} 
\]
Here we have used the fact that $\omega_C$ is locally free and that $f\circ\imath$ maps $Y$ to a point, so $\mcL\imath^*f^*\omega_C\simeq\O_Y$.
Now observe that if the construction of  $\du_{X/C}^p$ commuted with base change to this point, then one would have the following distinguished triangle:
\begin{equation}
    \label{false-dt}
    \xymatrix{\du_{Y}^{p-1}\ar[r] & \imath^*\Omega^p_X \ar[r] & \du_{Y}^p\ar[r]^-{+1}&.} 
\end{equation}
    Taking $p=n+1=\dim X$, and observing that then $\du_{Y}^{n+1}=0$, \eqref{false-dt} would imply that $\du_{Y}^{n}\simeq i^*\Omega^{n+1}_X$. In order to prove our claim, it suffices to show that $\imath^*\Omega^{n+1}_X$ and $\du_Y^{n}$ are not isomorphic.  

    Recall that there always exist cubical hyperresolutions, in which the first morphism is a resolution and all subsequent morphisms are from varieties of smaller dimension cf.~\cite[Thm.~5.26]{peters}. This implies that if $\nu:\widetilde Y\to Y$ is the normalization (i.e., a resolution) of $Y$, then $\du_Y^n\simeq\nu_*\omega_{\widetilde Y}$ cf.~\cite[p.~335]{steenbrink1}. As $\widetilde Y$ is smooth and $\nu$ is finite, $\nu_*\omega_{\widetilde Y}$ is locally isomorphic to $\nu_*\O_{\widetilde Y}$, which is \emph{not} isomorphic to $\O_Y$ near points where $Y$ is not normal. 

    On the other hand, as $X$ is smooth, $\imath^*\Omega^{n+1}_X$ is a line bundle and hence locally isomorphic  to $\O_Y$ everywhere. This implies that $\imath^*\Omega^{n+1}_X$ and $\du_Y^{n}$ are indeed not isomorphic, as desired.
\end{example}

\printbibliography 
\end{document}